\newtheorem{theorem}{Theorem}
\newtheorem{lemma}[theorem]{Lemma}
\newtheorem{definition}[theorem]{Definition}
\theoremstyle{definition}
\numberwithin{equation}{section}
\renewcommand{\t}[1]{\tilde{#1}}
\newcommand{\mbb}[1]{\mathbbm{#1}}
\renewcommand{\sc}[1]{\mathscr{#1}}
\newcommand{\xr}[1]{\xrightarrow{#1}}
\title{Smash nilpotence on Uniruled 3-folds}
\begin{document}

\author[R. Sebastian]{Ronnie Sebastian}

\address{Indian Institute of Science Education and Research (IISER),
Dr. Homi Bhabha Road, Pashan, Pune 411008, India }

\email{ronnie.sebastian@gmail.com}

\subjclass[2010]{14C25}

\keywords{Algebraic cycles, smash nilpotence}

\begin{abstract}
Voevodsky has conjectured that numerical 
and smash equivalence coincide on a smooth 
projective variety. We prove this conjecture 
holds for uniruled 3-folds and for one 
dimensional cycles on products of Kummer surfaces.
\end{abstract}
\maketitle
\section{Introduction}
Throughout this article we work over an algebraically 
closed field $k$ and with algebraic cycles with 
rational coefficients.

Let $X$ be a smooth and projective variety over $k$. 
In \cite{voe}, Voevodsky defines a cycle $\alpha$ 
to be smash nilpotent if the cycle 
$\alpha^n:=\alpha\times\alpha\ldots\times\alpha$ 
on the variety $X^n:=X\times X\ldots\times X$ is 
rationally equivalent to 0. It is trivial to see 
that a smash nilpotent cycle is numerically trivial,
Voevodsky conjectured that the converse also holds. 
Voevodsky, \cite{voe}, and Voisin, \cite{voisin},
prove that a cycle which is algebraically trivial
is smash nilpotent. 

Kimura, \cite[Proposition 6.1]{kimura}, proved that 
a morphism between finite dimensional motives of 
different parity is 
smash nilpotent. Thus, if an algebraic 
cycle can be viewed as a morphism between motives 
of different parities, then it is smash nilpotent. 
In \cite{ks}, the authors use this fact to prove 
that skew cycles on an abelian variety are smash 
nilpotent. A cycle $\beta$ is called skew if it satisfies 
$[-1]^*\beta=-\beta$. In \cite{ks} such  
cycles are expressed as morphisms between motives of 
different parity, using the fact that the motive of 
an abelian variety has a Chow-Kunneth decomposition, 
\[h(A)=\bigoplus_{i=0}^{2\,{\rm dim}\,A}h^i(A)\]
and the motives $h^i(A)$ for $i$ odd are oddly 
finite dimensional. 

In \cite{sebastian1} it is proved that for one 
dimensional cycles on a variety dominated 
by a product of curves, smash equivalence and numerical
equivalence coincide. The same result can be deduced 
from \cite{marini} and \cite{herbaut}, 
where it is shown that for a smooth projective curve $C$, 
for any adequate equivalence relation, $[C]_{i}=0$ 
implies that $[C]_{i+1}=0$, for $i\geq2$. Here $[C]_i$ denotes 
the Beauville component of the curve $C$ in 
its Jacobian satisfying $[n]_*[C]_i=n^i[C]_i$.
If we combine this with \cite{ks}, where it 
is shown that $[C]_3=0$ modulo smash equivalence, 
then one can deduce the results in
\cite{sebastian1}. 

If we take the Chow ring of an abelian variety modulo 
algebraic equivalence and go modulo the subring 
generated by the cycles in the preceding paragraphs
under the Pontryagin product, intersection product 
and Fourier transform, then there are no nontrivial 
examples of higher dimensional cycles (dim $>$ 1) for which 
Voevodsky's conjecture holds.

The purpose of this article is to write down some 
more examples for which this conjecture holds. The 
main theorems in this article are

\begin{theorem}\label{thm1}
Let $X$ be uniruled 3-fold. Then numerical and smash 
equivalence coincide for cycles on $X$. 
\end{theorem}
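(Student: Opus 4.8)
The plan is to show that every numerically trivial cycle on $X$ is in fact algebraically trivial, and then invoke the theorem of Voevodsky and Voisin that algebraically trivial cycles are smash nilpotent. Since $X$ is a threefold, a numerically trivial cycle of codimension $0$ vanishes, a numerically trivial divisor is algebraically trivial (numerical and algebraic equivalence agree for divisors with rational coefficients), and a numerically trivial zero-cycle has degree $0$, hence is algebraically trivial because $X$ is connected. Thus the only genuine case is that of a numerically trivial one-cycle (codimension $2$), and the entire difficulty is concentrated here.

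To treat one-cycles I would use the uniruledness of $X$. By definition there is a smooth projective surface $Y$ and a dominant rational map $Y\times\P^1\dashrightarrow X$; since $\dim(Y\times\P^1)=\dim X=3$ this map is generically finite, say of degree $d$. Resolving its indeterminacy by Hironaka (blowing up smooth centres) produces a smooth projective threefold $Z$ together with a morphism $\tau\colon Z\to W:=Y\times\P^1$ that is a composition of blow-ups along smooth centres, and a generically finite surjective morphism $g\colon Z\to X$ of degree $d$. For a numerically trivial one-cycle $\alpha$ on $X$ the projection formula gives $\alpha=\tfrac1d\,g_*g^*\alpha$, and $g^*\alpha$ is again numerically trivial on $Z$: for any divisor $D'$ on $Z$ one has $\deg(g^*\alpha\cdot D')=\deg(\alpha\cdot g_*D')=0$. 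Since proper push-forward preserves algebraic triviality, it suffices to prove that $g^*\alpha$ is algebraically trivial on $Z$, i.e.\ that numerical and algebraic equivalence coincide for one-cycles on $Z$.

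I would establish this last statement first on the product $W=Y\times\P^1$ and then propagate it up the blow-up tower to $Z$. On $W$ the K\"unneth/projective bundle decomposition gives $\mathrm{CH}_1(W)=\mathrm{CH}_1(Y)\oplus\mathrm{CH}_0(Y)$, where the first summand consists of classes $C\times\{pt\}$ and the second of classes $\zeta\times\P^1$. The intersection pairing against the divisors $D\times\P^1$ and $Y\times\{pt\}$ is block-diagonal with respect to this decomposition, so numerical triviality of a one-cycle forces its $\mathrm{CH}_1(Y)$-component to be a numerically trivial divisor on the surface $Y$ and its $\mathrm{CH}_0(Y)$-component to be a degree-zero zero-cycle on $Y$. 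Both of these are algebraically trivial on $Y$, and exterior product with a fixed cycle preserves algebraic triviality, so every numerically trivial one-cycle on $W$ is algebraically trivial. To pass from $W$ to $Z$ I would argue one blow-up at a time. For a blow-up $\pi\colon\widetilde V\to V$ of a smooth threefold along a smooth centre $B$ (a point or a smooth curve), the blow-up formula yields $\mathrm{CH}_1(\widetilde V)=\pi^*\mathrm{CH}_1(V)\oplus N$, where the numerically trivial part of $N$ vanishes when $B$ is a point, and where $N$ is generated by the fibres of the exceptional ruled surface $E\to B$ when $B$ is a curve. In the latter case a numerically trivial class in $N$ is a combination of fibres over a degree-zero zero-cycle on $B$, hence algebraically trivial because the fibres move in a connected family over $B$; and $\pi^*\gamma_0$ is numerically trivial exactly when $\gamma_0$ is. Consequently the coincidence of numerical and algebraic equivalence for one-cycles is inherited by each blow-up, and therefore holds on $Z$.

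Putting these together, $g^*\alpha$ is algebraically trivial on $Z$, so $\alpha=\tfrac1d\,g_*g^*\alpha$ is algebraically trivial on $X$, and the theorem of Voevodsky and Voisin makes it smash nilpotent; combined with the codimension $0,1,3$ cases this proves that numerical and smash equivalence coincide on $X$. The main obstacle I anticipate is the transfer of the one-cycle statement across the exceptional loci: one must check that the only numerically trivial one-cycles created by a smooth blow-up are differences of exceptional fibres, which are automatically algebraically trivial, and that the decomposition of $\mathrm{CH}_1$ furnished by the blow-up formula is compatible with the numerical pairing so that numerical triviality can be tested summand by summand.
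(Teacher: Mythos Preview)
Your overall architecture matches the paper's: resolve the indeterminacy of $S\times\P^1\dashrightarrow X$ by a sequence of blow-ups along smooth centres, verify the statement on $S\times\P^1$, propagate it up the tower, and then descend to $X$ along the resulting dominant morphism. Two points deserve comment.

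First, you prove the stronger assertion that numerically trivial one-cycles on $X$ are \emph{algebraically} trivial, and only at the very end invoke Voevodsky--Voisin. The paper never attempts this; it works directly with smash equivalence via its Lemma~\ref{l1}, which says that if numerical and smash equivalence agree on the centre and on the base, they agree on the blow-up. Your route yields a bit more information but forces you to check, blow-up by blow-up, that the projective bundle decomposition of $CH_1$ is orthogonal for the intersection pairing with divisors (the ``main obstacle'' you flag); the paper's lemma sidesteps this by treating the exceptional divisor abstractly through the projective bundle formula and the exact sequence of \cite[Prop.~6.7]{fulton}.

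Second, and more seriously, you invoke Hironaka to resolve the indeterminacy. The paper is stated over an arbitrary algebraically closed field, and its proof is careful to use \emph{principalization} of the ideal sheaf defining the indeterminacy locus rather than full resolution, citing \cite[Theorem~3.21]{kollar} in characteristic~$0$ and \cite[Theorem~1.3]{cutkosky} in positive characteristic (the latter being specific to threefolds). As written, your argument is valid only in characteristic~$0$; to match the paper's generality you must replace the appeal to Hironaka by principalization for threefolds, which is exactly what is available.
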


\begin{theorem}\label{thm2}
Let $K_i$, $i=1,2,\ldots,N$ be Kummer surfaces. 
Then numerical and smash equivalence coincide for 
one dimensional cycles on 
$X:=K_1\times K_2\times \cdots \times K_N$.
\end{theorem}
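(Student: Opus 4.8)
The plan is to deduce Theorem \ref{thm2} directly from the result of \cite{sebastian1} by showing that the product $X=K_1\times\cdots\times K_N$ is dominated by a product of curves; since numerical and smash equivalence coincide for one dimensional cycles on any variety dominated by a product of curves, this immediately gives the statement. The whole argument therefore reduces to producing, for each Kummer surface $K_i$, a dominant rational map from a product of two curves, and then assembling these over $i$. I expect the construction of these dominations to be the only genuinely geometric input, the conclusion being formal once they are in place.

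First I would dominate each $K_i$ by its abelian surface $A_i$. Writing $\iota=[-1]$ for the standard involution, the quotient $A_i\to A_i/\iota$ is a finite surjective morphism of degree two, and the minimal resolution $K_i\to A_i/\iota$ is an isomorphism over the complement of the sixteen nodes. Composing the quotient morphism with the inverse of the resolution over this common dense open locus yields a morphism from a dense open subset of $A_i$ into $K_i$, that is, a dominant rational map from $A_i$ to $K_i$. Next I would dominate each $A_i$ by a product of curves: choosing a polarization produces an isogeny from $A_i$ to a principally polarized abelian surface, which by the classification in dimension two is either the Jacobian of a smooth genus two curve $C_i$ or a product of two elliptic curves. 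In the first case the sum map $C_i\times C_i\to J(C_i)$ is dominant, and in the second the surface is already a product of two curves; since isogenous abelian varieties admit isogenies in both directions, precomposing with an isogeny back to $A_i$ shows that $A_i$ is dominated by a product of two curves. Chaining the two dominations, each $K_i$ is dominated by a product of two curves, and hence $X=\prod_{i=1}^N K_i$ is dominated by the product of all $2N$ resulting curves.

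The remaining step is formal. The composite rational map from $\prod_j C_j$ to $X$ is again dominant, because each map being composed is dominant and the relevant loci of indeterminacy have positive codimension, so the preimage of the domain of definition stays dense. A numerically trivial one dimensional cycle $\alpha$ on $X$ is then smash nilpotent by \cite{sebastian1}, which is exactly the assertion of Theorem \ref{thm2}. The points requiring care are the verification that ``dominated by a product of curves'' in \cite{sebastian1} is meant in the sense of a dominant rational map, so that the graph correspondence transports smash nilpotence with rational coefficients, and the invocation of the classification of principally polarized abelian surfaces; I regard the identification of each Kummer surface as a quotient of a product-of-curves-dominated abelian surface as the main obstacle, after which the transfer of smash nilpotence is immediate and no appeal to finite dimensionality of the Kummer motives is needed.
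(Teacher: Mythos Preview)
Your strategy differs from the paper's, and as written it has a genuine gap at exactly the point you yourself flag. The result of \cite{sebastian1}, in the form used in this paper (via Lemma~\ref{l2}), transports smash nilpotence along a dominant \emph{morphism}: one needs the projection formula identity $h_*(l^r\cdot h^*\alpha)=d\alpha$, which is not available for a mere rational map. Your construction produces a surjective morphism $\prod_j C_j\to\prod_i A_i$, but the passage $\prod_i A_i\dashrightarrow \prod_i K_i$ is only rational, with indeterminacy along the locus where some coordinate is a $2$--torsion point. That locus has dimension $2(N-1)$, so resolving the map forces you to blow up centers of large dimension inside $\prod_j C_j$; the resulting variety is no longer a product of curves, and you cannot invoke \cite{sebastian1} for it. Lemma~\ref{l1} would let you propagate the conclusion through such blow-ups, but only once you know numerical${}={}$smash for $1$--cycles on the centers, which are themselves products of $(N-1)$ surfaces---this is circular unless you set up an induction, and in any case it reintroduces the blow-up machinery you were trying to avoid. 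Your closing remark that ``the graph correspondence transports smash nilpotence'' is precisely what is not established: for a rational map the composite $\Gamma_*\Gamma^*$ acquires boundary contributions that you have not controlled.

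The paper sidesteps this difficulty by never attempting to dominate $\prod_i K_i$ by a product of curves. Instead it first treats $N=2$ via an actual morphism $\t{Y}_1\times\t{Y}_2\to K_1\times K_2$, where $\t{Y}_i$ is the blow-up of the abelian surface $Y_i$ at its $2$--torsion; the map $\t{Y}_1\times\t{Y}_2\to Y_1\times Y_2$ factors as two blow-ups with \emph{surface} centers, so Lemma~\ref{l1} applies directly, and \cite{sebastian1} handles $Y_1\times Y_2$ since that is genuinely dominated by a morphism from a product of curves. For general $N$ the paper does not dominate at all: it applies the diagonal identity $\Delta_C\sim_{sm}\sum q_{ij}\,p_{ij*}(\Delta_C)$ from \cite{sebastian1} to the normalization of each irreducible curve in $X$, which reduces any $1$--cycle on $\prod_i K_i$ modulo smash equivalence to cycles supported on two-fold factors $K_i\times K_j$. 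Thus the key lemma you are missing is either an honest morphism from a product of curves onto $\prod_i K_i$ (which you have not produced and which is not obvious), or the diagonal reduction trick that bypasses the need for one.
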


The proof of the above theorems use Lemma \ref{l1}, 
which implies the following. If numerical and 
smash equivalence coincide on a smooth and 
projective variety $Y$, then they coincide 
on $\t{Y}$, which is obtained by blowing 
up $Y$ along a smooth subvariety of 
dimension $\leq 2$.

{\bf Acknowledgements}. We thank 
Najmuddin Fakhruddin for useful discussions.

\section{Smash equivalence and blow ups}
Let 
$Y$ be a smooth variety and $i:X\hookrightarrow Y$ 
be a smooth and closed subvariety. Let $f:\t{Y}\to Y$ 
denote the blow-up of $Y$ along $X$. 

\begin{lemma}\label{l1}
If numerical and smash equivalence coincide for elements 
in $CH_i(X)$ for $i\leq r$ and $CH_r(Y)$, then they 
coincide for elements in $CH_r(\t{Y})$.
\end{lemma}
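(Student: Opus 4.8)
The plan is to reduce the statement to the blow-up decomposition of Chow groups and to exploit that every operation appearing in that decomposition is induced by an algebraic correspondence, hence preserves both numerical and smash equivalence. Let $c$ denote the codimension of $X$ in $Y$, let $E=\P(N)\subset\t{Y}$ be the exceptional divisor, where $N=N_{X/Y}$ is the normal bundle of rank $c$, and write $g\colon E\to X$ for the projection, $j\colon E\hookrightarrow\t{Y}$ for the inclusion, and $\zeta=c_1(\mathcal{O}_E(1))$. The classical blow-up formula, built from the projective bundle formula for $E$, gives an isomorphism
\[
CH_r(\t{Y})\;\cong\;f^*CH_r(Y)\;\oplus\;\bigoplus_{i=0}^{c-2}j_*\big(\zeta^i\cdot g^*CH_{r-c+1+i}(X)\big).
\]
The $X$-summands are indexed by $i=0,\dots,c-2$, so the relevant dimensions $r-c+1+i$ range over $r-c+1,\dots,r-1$, all of which are $\le r-1$. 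Thus every $X$-summand is covered by the hypothesis on $CH_i(X)$ for $i\le r$, and the remaining summand by the hypothesis on $CH_r(Y)$.

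Next I would observe that the maps building this decomposition, namely $f^*$, $g^*$, $j_*$, and intersection with the divisor class $\zeta$, are each induced by algebraic correspondences (the graphs of $f$, $g$, $j$, and the cycle class $\zeta$ respectively); since $Y$, $\t{Y}$, $E$ and $X$ are smooth, the same is true of the projectors onto the summands, so the inverse isomorphism is correspondence-induced as well. I then invoke two standard stability properties of correspondences. First, a correspondence carries numerically trivial cycles to numerically trivial cycles, by the adjunction $\la\Gamma_*\alpha,\beta\ra=\la\alpha,\Gamma^t_*\beta\ra$. Second, a correspondence carries smash nilpotent cycles to smash nilpotent cycles, since $(\Gamma_*\alpha)^n=(\Gamma\times\cdots\times\Gamma)_*(\alpha^n)$, so that $\alpha^n=0$ forces $(\Gamma_*\alpha)^n=0$; smash nilpotence is moreover closed under addition.

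With these tools the argument is short. Let $\alpha\in CH_r(\t{Y})$ be numerically trivial. Applying the correspondence-induced projectors of the decomposition produces components $\beta\in CH_r(Y)$ and $\gamma_i\in CH_{r-c+1+i}(X)$ for $0\le i\le c-2$, each of which is again numerically trivial by the first stability property. By the hypotheses of the lemma, $\beta$ and every $\gamma_i$ are smash nilpotent. Reassembling via the isomorphism,
\[
\alpha\;=\;f^*\beta\;+\;\sum_{i=0}^{c-2}j_*\big(\zeta^i\cdot g^*\gamma_i\big),
\]
exhibits $\alpha$ as a finite sum of images of smash nilpotent cycles under correspondences, hence $\alpha$ is smash nilpotent by the second stability property. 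The reverse implication, that smash nilpotence forces numerical triviality, is automatic.

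The main obstacle I anticipate is not the cycle-theoretic bookkeeping but the verification that the blow-up decomposition is genuinely realized by correspondences in a way compatible with both equivalences, that is, confirming both that the projector onto each summand is an honest algebraic cycle class and that numerical triviality of $\alpha$ descends to each component. Once the decomposition is set up at the level of correspondences this is formal, so the only care needed is the index count ensuring that all the $X$-pieces land in dimensions $\le r$, which is exactly where the hypothesis $i\le r$ on $CH_i(X)$ is used.
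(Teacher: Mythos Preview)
Your argument is correct and rests on the same underlying ingredients as the paper's proof: the projective bundle formula for the exceptional divisor together with the blow-up description of $CH_r(\t{Y})$, combined with the fact that the structural maps are induced by correspondences and hence respect both numerical and smash equivalence.

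The packaging differs slightly. The paper invokes Fulton's exact sequence
\[
0\to CH_r(X)\to CH_r(\t{X})\oplus CH_r(Y)\to CH_r(\t{Y})\to 0,
\]
first handling $CH_r(\t{X})$ via the projective bundle formula and then pushing forward to $\t{Y}$ by surjectivity. You instead use the direct-sum blow-up decomposition (essentially Manin's motivic decomposition of a blow-up) with its correspondence-realized projectors, which lets you read off directly that a numerically trivial $\alpha$ has numerically trivial components in $CH_r(Y)$ and in the various $CH_{r-c+1+i}(X)$. Your route is a bit more explicit about why numerical triviality descends to each summand, a point the paper leaves implicit; the paper's route is shorter once one accepts that the maps in the exact sequence are correspondence-induced. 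Either way the index bookkeeping is the same, and your observation that the $X$-pieces sit in dimensions $\le r-1$ is exactly what forces the hypothesis on $CH_i(X)$ for all $i\le r$.
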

\begin{proof}
Consider the 
Cartesian square
\[\xymatrix{\t{X}\ar[r]\ar[d]_g & \t{Y}\ar[d]^f\\
		X\ar[r]^{i} & Y}\]
Then \cite[Proposition 6.7]{fulton} says that there 
is an exact sequence 
\[0\to CH_r(X)\to CH_r(\t{X})\oplus CH_r(Y)\to CH_r(\t{Y})\to 0\]
Since $X$ is a smooth subvariety of $Y$, we 
have that $\t{X}\xr{g}X$ is the projective bundle 
associated to the locally free sheaf 
$\sc{I}_{X}/\sc{I}_X^2$ on $X$. Thus, every element  
$\beta\in CH_r(\t{X})$ may be expressed as the sum 
\[\beta=\sum_{i=0}^{d-1}c_1(\sc{O}(1))^i\cap g^*g_*(\beta\cap c_1(\sc{O}(1))^{d-1-i}),\]
where $\sc{O}(1)$ is the tautological bundle on $\t{X}$.
If we assume that numerically trivial elements 
in $CH_i(X)$ are smash nilpotent for $i\leq r$, 
then the above formula shows that numerically 
trivial elements in $CH_r(\t{X})$ are smash nilpotent. 
If numerical and smash equivalence coincide
for elements in $CH_r(Y)$, then the above exact sequence 
would show that these coincide for elements in 
$\t{Y}$ as well. 
\end{proof}

The following is a standard result which we include 
for the benefit of the reader.
\begin{lemma}\label{l2}
Let $X$ be a smooth projective variety and let $h:Y\to X$ 
be a dominant morphism. If numerical and smash 
equivalence coincide for cycles on $Y$, then they 
coincide for cycles on $X$.
\end{lemma}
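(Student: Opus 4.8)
The plan is to transport a numerically trivial cycle from $X$ up to $Y$, apply the hypothesis there, and then transport the result back down, using an auxiliary cycle to compensate for the positive dimension of the fibres of $h$. The argument rests on three stability properties of smash nilpotence, all formal consequences of the compatibility of the exterior power (written $\beta^n=\beta\times\cdots\times\beta$ as in the introduction) with the three basic operations: $(h^*\alpha)^n=(h\times\cdots\times h)^*(\alpha^n)$, $(\beta\cdot\gamma)^n=\beta^n\cdot\gamma^n$, and $(h_*\beta)^n=(h\times\cdots\times h)_*(\beta^n)$. Hence pullback along $h$, intersection with a fixed cycle, and pushforward along $h$ each send smash nilpotent cycles to smash nilpotent cycles. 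I would record these three facts at the outset; here $Y$ is taken to be smooth and projective, as it is in every application of the lemma, so that $h^*$ and the intersection product on $Y$ are available.

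Now let $\alpha$ be a numerically trivial cycle on $X$. First I would verify that $h^*\alpha$ is numerically trivial on $Y$: for any $\xi$ of complementary dimension the projection formula gives $\deg(h^*\alpha\cdot\xi)=\deg(\alpha\cdot h_*\xi)$, which vanishes because $\alpha$ is numerically trivial. The hypothesis on $Y$ then yields that $h^*\alpha$ is smash nilpotent. Next I would fix an ample line bundle $L$ on $Y$ and set $w=c_1(L)^{\dim Y-\dim X}\cap[Y]$, a cycle of dimension $\dim X$; dominance of $h$ forces $h_*w=d\,[X]$ with $d$ equal to the number of points in which $w$ meets a general fibre of $h$, which is strictly positive since $L$ is ample and the fibre has dimension $\dim Y-\dim X$. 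Intersecting the smash nilpotent cycle $h^*\alpha$ with the fixed cycle $w$ and applying the projection formula once more gives
\[h_*(h^*\alpha\cdot w)=\alpha\cdot h_*w=d\,\alpha.\]
The left-hand side is smash nilpotent by the stability properties, so $d\,\alpha$ is smash nilpotent, and since we work with rational coefficients $\alpha$ itself is smash nilpotent.

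The point demanding care, and the reason for introducing $w$, is that $h$ need not be generically finite. Applying $h_*$ directly to $h^*\alpha$ is useless: the projection formula gives $h_*h^*\alpha=\alpha\cdot h_*[Y]$, and $h_*[Y]=0$ whenever $\dim Y>\dim X$, so all information is lost. One is tempted to cut $Y$ down to a subvariety mapping generically finitely onto $X$, but this would require transferring the hypothesis from $Y$ to that subvariety, which is not granted. Replacing $[Y]$ by the lower-dimensional class $w$ is exactly the fix: $w$ lives in the dimension where $h_*$ maps onto the multiples of $[X]$, and intersecting with it preserves smash nilpotence. Thus the only genuine obstacle is the nonvanishing of the degree $d$, which is guaranteed by the dominance of $h$ together with the ampleness of $L$.
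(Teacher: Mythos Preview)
Your proof is correct and follows essentially the same route as the paper: pull back the numerically trivial cycle, cap with a top power of an ample (the paper takes a relatively ample) class to bring it to the right dimension, and push forward via the projection formula to recover a nonzero multiple of $\alpha$. The only cosmetic difference is that the paper applies the hypothesis on $Y$ to the already-intersected class $l^{r}\cdot h^{*}\alpha$ (noting it is numerically trivial), whereas you first conclude that $h^{*}\alpha$ is smash nilpotent and then intersect; both orderings work, and your explicit remark that $Y$ must be taken smooth projective is a useful clarification the paper leaves implicit.
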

\begin{proof}
Let $l\in CH^1(Y)$ be a relatively ample line bundle. 
The relative dimension of $h$ is $r:=\rm{dim}(Y)-\rm{dim}(X)$ 
and define $d$ by $h_*(l^r)=:d[Y]$. 
Then by the projection formula, we have 
$\forall \alpha\in CH^*(X)$
\[h_*(l^r\cdot h^*\alpha)=d\alpha\] 
If $\alpha$ is a numerically trivial cycle on $X$, then 
$l^{r}\cdot h^*\alpha$ is a numerically trivial cycle on 
$Y$ and so is smash nilpotent. The above equation shows 
that $\alpha$ is smash nilpotent.
\end{proof}

\section{Examples}

\subsection{Uniruled 3-folds}
\begin{definition}By a uniruled 
3-fold we mean a smooth projective variety $X$ for 
which there is a dominant rational map 
$\varphi:S\times \mbb{P}^1\dashrightarrow X$ for 
some smooth projective surface $S$. 
\end{definition}

\begin{proof}[{\bf Proof of Theorem \ref{thm1}}]Since $X$ is projective 
and $Y:=S\times \mbb{P}^1$ is normal, $\varphi$ can be 
defined on an open set $U$ whose compliment 
has codimension $\geq 2$. Let $X\hookrightarrow \mbb{P}^n$ 
be a closed immersion, composing this with $\varphi$ we 
get a morphism $g:U\to \mbb{P}^n$. Let $L$ denote the 
pullback of $\sc{O}(1)$ along $g$. Since 
$Y\setminus U$ has codimension $\geq2$, there is a 
unique line bundle on $Y$ which restricts to $L$, 
we denote this also by $L$. As $Y$ is smooth and 
codimension $Y\setminus U$ is $\geq2$, the restriction 
map $H^0(Y,L)\to H^0(U,L)$ is an isomorphism, see, 
for example \cite[Chapter 3, Ex 3.5]{hart}. Let 
$V\subset H^0(Y,L)$ be the subspace of global 
sections $g^*H^0(\mbb{P}^n,\sc{O}(1))$. Let 
$J\subset L$ be the subsheaf generated by $V$, 
then $I=J\otimes L^{-1}$ is an ideal sheaf such that 
$Y\setminus \rm{Supp}(I)=U$  
and $V$ is contained in the image of the map 
$H^0(Y,I\otimes L)\to H^0(Y,L)$ and it generates 
$I\otimes L$.

We want to apply the principalization theorem to the 
ideal sheaf $I$. In characteristic 0, see
\cite[Theorem 3.21]{kollar}, and in 
positive characteristic, see \cite[Theorem 1.3]{cutkosky}.
We get a 
morphism $f:Y'\to Y$ which is obtained as a composite 
of smooth blow-ups, such that $f^*I$ is a locally 
principal ideal sheaf and $f$ is an isomorphism on 
$f^{-1}(U)$. The subspace 
$f^*V\subset H^0(Y',f^*I\otimes f^*L)$ defines a map 
$Y'\to \mbb{P}^n$ which extends $g$. Thus, we get a 
dominant morphism $Y'\to X$. As $S$ is a surface, 
numerical and smash equivalence coincide for cycles 
on $S$ and so for cycles on $Y$. Since $Y'$ is obtained 
from $Y$ by blowing up at smooth centers and 
$\rm{dim}(Y)=3$, numerical and smash 
equivalence coincide for $Y'$ using Lemma \ref{l1}. 
Finally, use Lemma \ref{l2} to get the same result 
for $X$.
\end{proof}

\subsection{Kummer surfaces}
Let $Y$ be an abelian surface and let $X$ 
be the set of 2 torsion points. These are exactly 
the fixed points for 
the involution $x\mapsto x^{-1}$ on $Y$. This 
involution lifts to an involution of $\t{Y}$ 
which we denote $\t{i}$ and the quotient $\t{Y}/\t{i}$ 
is the Kummer surface associated to $Y$. We 
denote this surface by $K$ and by $\pi$ the 
quotient map $\t{Y}\to K$. 

Let $Y_i$, $i=1,2$ be abelian surfaces and 
let $K_i$, $i=1,2$ be the associated Kummer 
surfaces. Let $X_i$ be the set of 2 torsion 
points in $Y_i$. Similarly, we have the varieties 
$\t{Y}_i$ and there is a dominant projective 
map $\t{Y}_1\times \t{Y}_2\to K_1\times K_2$.

The map $\t{Y}_1\times \t{Y}_2 \to Y_1\times Y_2$ 
may be factored as the composite of two blow ups
\[\t{Y}_1\times \t{Y}_2 \to \t{Y}_1\times Y_2 \to Y_1\times Y_2\]
the first along the surface $X_1\times Y_2$ and 
the second along the surface $\t{Y}_1\times X_2$.
Applying Lemma \ref{l1} to both these blow ups, we get that 
numerical and smash equivalence coincide for 
one dimensional cycles on $\t{Y}_1\times \t{Y}_2$ 
and so using Lemma \ref{l2} they coincide on $K_1\times K_2$. 

\begin{proof}[{\bf Proof of Theorem \ref{thm2}}]
We recall a result from \cite{sebastian1} which we 
need. Let $N\geq 3$ be an integer and let $C$ be a 
smooth projective curve with a base point $c_0$. 
Let $\Delta_C$ denote the diagonal embedding 
$C\hookrightarrow C^N$. Let $p_{ij}:C^N\to C^N$ 
denote the map which leaves the $i$th and $j$th 
coordinates intact and the other coordinates are 
changed to $c_0$, for example, 
$p_{12}(x_1,x_2,\ldots,x_N)=(x_1,x_2,c_0,c_0,\ldots,c_0)$.
Then there are rational numbers $q_{ij}$ such that
\begin{equation}\label{e1}\Delta_C\sim_{sm}\sum_{i\neq j}q_{ij}p_{ij*}(\Delta_C).\end{equation}

Let $X:=K_1\times K_2\ldots\times K_N$ be a product 
of Kummer surfaces.
Fix base points $e_i\in K_i$, and define (we abuse notation here)
$p_{ij}:X\to X$ 
in the same way as above, using these base points. We 
remark that if we work modulo algebraic equivalence, 
for any cycle $\alpha\in CH_*(X)$, the cycle 
$p_{ij*}(\alpha)$ is independent of the choice 
of these base points. Hence, the same is true modulo 
smash equivalence.

Let $D\hookrightarrow X$ be a reduced and irreducible 
one dimensional subvariety. Let $C\to D$ be its normalization 
and denote the composite map by $f:C\to X$. If we let 
$p_i$ denote the projection from $X$ to $K_i$ and let 
\[\pi:=(p_1\circ f)\times (p_2\circ f)\ldots(p_n\circ f),\]
then we get $f_*([C])=\pi_*(\Delta_C)$. Using equation 
\eqref{e1}, we get that modulo smash equivalence
\begin{align*}
	[D]=f_*([C])=&\pi_*(\Delta_C)=\sum_{i\neq j}q_{ij}\pi_*p_{ij*}(\Delta_C)=\sum_{i\neq j}q_{ij}p_{ij*}\pi_*(\Delta_C)\\
		=&\sum_{i\neq j}q_{ij}p_{ij*}([D])
\end{align*}
In particular, we get for any one dimensional cycle $\alpha$,
\[\alpha=\sum_{i\neq j}q_{ij}p_{ij*}(\alpha)\]
As we have seen above, on a product of two Kummer surfaces, numerical and 
smash equivalence coincide for one dimensional cycles. If $\alpha$ is numerically trivial, 
each $p_{ij*}(\alpha)$ is numerically trivial and so smash 
nilpotent. Thus, $\alpha$ is smash nilpotent.
\end{proof}

\end{document}